\title{\LARGE \bf
Scenario Reduction with Guarantees for Stochastic Optimal Control of Linear Systems
}
\author{Francesco Cordiano, Bart De Schutter
\thanks{
This project has received funding from the European Research Council (ERC) under the European Union's Horizon 2020 research and innovation programme (Grant agreement No. 101018826 - ERC Advanced Grant CLariNet).}
\thanks{
The authors are with the Delft Center for Systems and Control, Delft University of Technology,
Delft, The Netherlands,
{\tt\small \{f.cordiano, b.deschutter\}@tudelft.nl}}%
}
\renewcommand{\P}{\mathbb{P}}
\newcommand{\tP}{\Tilde{\mathbb{P}}}
\renewcommand{\E}{\mathbf{E}}
\renewcommand{\Pr}{\mathrm{Pr}}
\begin{document}

\maketitle
\thispagestyle{empty}
\pagestyle{empty}

%%%%%%%%%%%%%%%%%%%%%%%%%%%%%%%%%%%%%%%%%%%%%%%%%%%%%%%%%%%%%%%%%%%%%%%%%%%%%%%%
\begin{abstract}
Scenario reduction algorithms can be an effective means to provide a tractable description of the uncertainty in optimal control problems. However, they might significantly compromise the performance of the controlled system.
In this paper, we propose a method to compensate for the effect of scenario reduction on stochastic optimal control problems for chance-constrained linear systems with additive uncertainty. We consider a setting in which the uncertainty has a discrete distribution, where the number of possible realizations is large. We then propose a reduction algorithm with a problem-dependent loss function, and we define sufficient conditions on the stochastic optimal control problem to ensure out-of-sample guarantees (i.e., against the original distribution of the uncertainty) for the controlled system in terms of performance and chance constraint satisfaction. Finally, we demonstrate the effectiveness of the approach on a numerical example.

\end{abstract}

%%%%%%%%%%%%%%%%%%%%%%%%%%%%%%%%%%%%%%%%%%%%%%%%%%%%%%%%%%%%%%%%%%%%%%%%%%%%%%%%

\section{INTRODUCTION}

Dealing with uncertainty is one of the major challenges in optimal control and decision making. Indeed, several real-world applications involve some level of uncertainty, which can negatively affect the system performance in terms of cost, safety, and reliability. Stochastic optimal control \cite{mesbah2016stochastic} has received significant attention in the recent years due to the possibility of explicitly encoding a probabilistic description of the uncertainty in the optimal control problem.

However, one of the main challenges in stochastic optimal control problems is to provide a description of the uncertainty such that: a) it is accurate enough so that performance guarantees can be given for the controlled system; and b) the resulting optimal control problem is numerically tractable and amenable to online optimization. In this regard, a popular technique to represent the uncertainty in a tractable way consists in employing a set of \emph{scenarios}.
Scenario-based methods are rooted in multistage stochastic programs \cite{ruszczynski2003stochastic}, where a scenario tree is built over the prediction horizon according to the possible realization of the uncertainty \cite{bernardini2012stabilizing}. More generally, scenarios can be intended as time series with an associated probability, which allows the representation of complex stochastic phenomena, e.g., the price of an asset in trading problems \cite{bemporad2010scenariobased, sharma2017scenario}, the solar irradiance and the ambient temperature in building heating systems \cite{pippia2021scenariobased}, or the degree of deterioration of railways in preventive maintenance \cite{su2017multilevel}. Scenario-based methods rely on an atomic description of the uncertainty, which is either assumed to be given (e.g., obtained from a forecast \cite{morales2009scenario}), or estimated via scenario generation methods \cite{sharma2017scenario, pippia2021scenariobased}, or obtained from a continuous distribution via discretization \cite{pflug2001scenario}. 

However, one of the main shortcomings of scenario-based methods is that a large number of scenarios is required to obtain an accurate representation of the uncertainty, especially in the case of a continuous distribution, which is typically approximated by means of a sample-average approximation \cite{pagnoncelli2009sample}. In general, to tackle the issue of the growing complexity, several scenario reduction algorithms have been proposed in the context of stochastic programming, where the aim is to determine a set of scenarios with reduced cardinality that can well approximate some characteristics of the original scenario set. Examples of reduction algorithms can be found in \cite{dupavcova2003scenario, henrion2008discrepancy}, where the accuracy of the approximation is measured in terms of a probability distance (e.g., the Wasserstein distance \cite{romisch2003stability}) between the distributions describing the initial and reduced scenario set, or in \cite{rujeerapaiboon2022scenario, wang2017scenario}, where scenarios are grouped according to their similarity using clustering-based algorithms.

Using a limited number of scenarios is justified by quantitative stability
results for stochastic programs \cite{romisch2007stability, rockafellar2009variational}, where it is proven that the optimal value of a stochastic program, or the set of optimal solutions, possesses some type of continuity properties with respect to changes in the probability distribution of the uncertainty. However, when only a limited number of scenarios is used in a stochastic optimal control problem, guarantees hold only for the \emph{in-sample} performance of the system, i.e.\ the performance against the considered scenario set, whereas the \emph{out-of-sample} (OOS) performance, i.e.\ the performance against the original distribution of the uncertainty, in terms of incurred cost and actual constraint satisfaction, can be seriously compromised. \textcolor{black}{For example, previous works such as
\cite{bernardini2012stabilizing, lucia2012new, holtorf2019multistage}, construct a scenario tree by considering only some possible realizations of the uncertainty, but they do not consider chance constraints, which would require the complete tree structure to provide guarantees for the controlled systems. Chance constraints are considered in \cite{blackmore2010probabilistic} as a sample average approximation, but guarantees hold only if the number of scenarios approaches infinity. Alternatively, they are considered in \cite{prandini2012randomized} via randomized algorithms, where performance guarantees are provided by an appropriate sampling of the constraints, which however might lead to conservative solutions.}

\textcolor{black}{
In light of these considerations,  in this paper we explore the use of scenario reduction algorithms in stochastic optimal control problems for chance-constrained linear systems with additive uncertainty. We consider a setting where the actual distribution of the uncertainty is discrete, and the number of possible realizations (i.e., the scenarios) is finite but huge, which requires a reduction algorithm to allow implementation in an online setting. Our contributions can be summarized in the following points:
\begin{itemize}
    \item We derive a novel problem-dependent clustering-based scenario reduction algorithm that, in contrast to more standard algorithms based on $k$-means \cite{hartigan1979algorithm, rujeerapaiboon2022scenario}, or $k$-medians \cite{whelan2015understanding},  minimizes a loss function that is shown to have a direct connection with the cost function employed in the optimal control problem;
    \item In contrast to other scenario-based methods \cite{ bernardini2012stabilizing, lucia2012new, blackmore2010probabilistic, prandini2012randomized}, we solve a chance-constrained scenario-based optimal control problem by considering only the scenarios resulting from the proposed reduction algorithm, which leads to computational benefits. We then robustify the optimal control problem by taking into account the effect of the discarded scenarios implicitly via constraint tightening, and by over-approximating the cost function.
\end{itemize}
} The resulting optimal control problem benefits from a reduced computational complexity thanks to the usage of a reduced number of scenarios, while preserving OOS guarantees against the original distribution of the scenarios

The rest of the paper is organized as follows. In Section \ref{sec:prob}, we introduce the scenario-based optimal control problem, and in Section \ref{sec:red} we discuss a problem-dependent reduction algorithm based on clustering. In Section \ref{sec:oos} we describe how to modify the constraint set and the cost function in order to preserve OOS guarantees, and in Section \ref{sec:ex} we demonstrate the effectiveness of the proposed approach on a numerical example. Section VI concludes the paper.

\section{Preliminaries}\label{sec:prob}

\subsection{Notation}
The symbol $\mathbf{1}_X$ denotes the indicator function, which takes the value $1$ if the logical statement $X$ is true, and 0 otherwise. With $\|\cdot\|_l$ we denote an $l$-norm, with $l\in\{1,...,\infty\}$. The symbols $\oplus$ and $\ominus$ denote, respectively, the Minkowski sum and difference. The symbols $\Pr$ and $\E$ denote, respectively, the probability and expectation operators. Given $n$ matrices $P_1,...,P_n$, diag$(P_1,...,P_n)$ is the block-diagonal matrix where the diagonal blocks are the matrices $P_1,...,P_n$.

\subsection{Problem statement}
We consider a discrete-time linear system affected by stochastic additive uncertainty:
\begin{align}\label{eq:sys}
    x_{t+1} = Ax_t + Bu_t + \eta_t, 
\end{align}
where $x_t\in\R^n$ and $u_t\in\R^m$ are, respectively, the state and input of the system at time step $t$, and $\eta_t\in\R^n$ is an exogenous additive stochastic signal drawn from a distribution $\P$.
The goal of the controller is to employ the system dynamics \eqref{eq:sys} to find a sequence of inputs $\us_{t},...,\us_{t+N-1}$ such that the expected performance of the system
\begin{align}
    \E\textsubscript{$\eta_{t},...,\eta_{t+N-1}$} \left[\sum_{k=1}^{N} \left(\|Q x_{t+k}\|_1 + \|R u_{t+k-1}\|_1 \right) \right] \label{socp:cost} 
\end{align}
is minimized, while satisfying the following constraints on the state and on the input of the system:
\begin{align}
    & \Pr(x_{t+k}\in\Xcal, \ k=1,...,N) \geq 1-\varepsilon, \label{socp:sc}
    \\ & u_{t+k} \in \Ucal,  \ k=0,...,N-1, \ \label{socp:uc}
\end{align}
\textcolor{black}{where $Q\in\R^{n\times n}$, $R\in\R^{m\times m}$ are matrices with non-negative entries,} $\varepsilon\in[0, 1)$ is a risk-tolerance parameter, and $\Xcal, \Ucal$ are polytopic convex sets, i.e.: $\Xcal:= \{x\in\R^n:H_x x\leq h_x\}$, and $\Ucal:= \{u\in\R^m:H_u u\leq h_u\}$, where $H_x, H_u, h_x, h_u$ are matrices and vectors of appropriate dimensions. \textcolor{black}{For simplicity, in the rest of the paper we assume that $Q$ and $R$ are identity matrices of appropriate dimension, as this does not significantly impact the proposed analysis.}

To derive tractable reformulations for 
\eqref{socp:cost}, \eqref{socp:sc}, a popular choice in the literature is to represent the uncertainty by means of scenarios \cite{bemporad2010scenariobased, pippia2021scenariobased, pagnoncelli2009sample}. In general, the required number of scenarios to obtain a reliable approximation can be large, or even infinite in the case of a continuous distribution \cite{pagnoncelli2009sample}. In this paper, for simplicity, we directly consider the case in which $\P$ is discrete, but the number of the possible realization (i.e., the possible scenarios) can be very large. Let the following assumption hold:
\begin{assumption}\label{ass:eta}
    The stochastic signal $\eta_t, \forall t\geq0,$ is drawn from a discrete distribution $\P$, such that $\Pr(\eta_t = \etaj_t, \forall t\geq0) = \pj, j=1,...,M$, where $M\in\Z_{>0}$ represents the number of possible realizations (or scenarios), and the probabilities $\pj$ satisfy:  $\sum_{j=1}^{M}\pj=1, \pj\in(0,1], j=1,...,M$. Both the scenarios $\etaj$ and the probabilities $\pj$ are assumed to be known.
\end{assumption}

In this setting, a scenario is conceived as a (multidimensional) time series with an associated probability, as proposed, e.g., in \cite{pippia2021scenariobased, morales2009scenario}. This allows to consider time-correlated phenomena (e.g., the solar irradiance in heating control systems \cite{pippia2021scenariobased}), and it also includes the more traditional scenario tree \cite{bernardini2012stabilizing}.
Under Assumption \ref{ass:eta}, a stochastic optimal control problem can be formulated as \cite{pagnoncelli2009sample}
\begin{subequations}\label{sbocp}
\begin{align}
    \min_{u_t,...,u_{t+N-1}} \ & \sum_{j=1}^{M} \pj \sum_{k=1}^{N} \left(\|\xj_{t+k}\|_1 + \|u_{t+k-1}\|_1 \right) \label{sbocp:cost} 
    \\ \textup{s.t.} \ & \xj_t = x_t, \ j=1,...,M,
    \\ & \xj_{t+k+1} = A\xj_{t+k} + Bu_{t+k} + \etaj_{t+k}, \label{sbocp:dyn}
    \\ & \sum_{j=1}^M \pj \mathbf{1}_{\xj_{t+k}\in\Xcal, \ k=1,...,N} \geq 1-\varepsilon, \label{sbocp:sc}
    \\ & u_{t+k} \in \Ucal, \ \label{sbocp:uc}
    \\ & k=0,...,N-1. \nonumber
\end{align}
\end{subequations}
In particular, \eqref{sbocp:cost} is the expectation \eqref{socp:cost} computed over the discrete scenario set, \eqref{sbocp:dyn} describes the evolution \eqref{eq:sys} according to the $j$-th scenario, and \eqref{sbocp:sc} is an equivalent reformulation of \eqref{socp:sc} by means of the indicator function \cite{pagnoncelli2009sample}.
\begin{remark}
    The usage of the 1-norm, even if less common than the squared 2-norm, is frequently used in optimal control, e.g., for piecewise affine systems, where the corresponding optimization problem can be cast as a mixed-integer linear program (MILP) \cite{richards2005mixedinteger}. Notice that also \eqref{sbocp} can be cast as an MILP, as the chance constraint \eqref{sbocp:sc} admits an equivalent reformulation by associating one logical variable to each scenario, whose value equals the indicator function. In addition, the usage of the 1-norm will turn to be useful in the performance analysis in Section \ref{sec:oos}
\end{remark}

In general, one of the main shortcomings in scenario-based formulations of stochastic programs is that the computational complexity of \eqref{sbocp} grows rapidly with the number of scenarios, especially due to the non-smooth chance constraint \eqref{sbocp:sc}. An effective means to reduce the computational burden of \eqref{sbocp} is provided by scenario reduction algorithms \cite{rujeerapaiboon2022scenario}, where from an initial discrete distribution $\P$, the goal is to find a discrete distribution $\tP$ whose domain has a lower cardinality than the domain of $\P$, in a way that $\tP$ provides a good description of $\P$ (e.g., in terms of the Wasserstein distance \cite{romisch2003stability}). Hence, let 
$\teta_t\in\R^n, t\in\Z_{\geq0}$ be a stochastic signal described by the distribution $\tP$, such that: $\Pr(\teta_t = \tetaj_t, \forall t\geq0) = \tpj, \sum_{j=1}^{\tM}=1, \tpj\in(0,1],  j=1,...,\tM$, where $\tM\in\Z_{>0}, \tM<M$, represents the cardinality of the reduced set of scenarios. The way in which the scenarios $\teta^{(1)}, ..., \teta^{(\tM)},$ and the corresponding probabilities $\tp^{(1)},..., \tp^{(\tM)},$ are chosen clearly affects the solution of the resulting optimal control problem; hence, in the following, we provide an equivalent reformulation of \eqref{sbocp}, where the dependence of the cost and the constraint set on the distribution $\tP$ is made explicit.
Let us introduce the following compact notation:
\begin{align}\label{eq:compact_vec}
    & \xxj_t = \begin{bmatrix}
        \xj_{t+1} \\ \vdots \\ \xj_{t+N}
    \end{bmatrix}, \  
    \uu_t = \begin{bmatrix}
        u_{t} \\ \vdots \\ u_{t+N-1}
    \end{bmatrix}, \ 
    \teetaj_t = \begin{bmatrix}
        \tetaj_{t} \\ \vdots \\ \tetaj_{t+N-1}
    \end{bmatrix}, \\
    & j=1,...,\tM, \nonumber
\end{align}
for which we have:
\begin{equation}\label{eq:compact_dyn}
    \xxj_t =   \FF x_t + \GG\uu_t + \GGamma\teetaj_t,
\end{equation}
where $\FF, \GG, \GGamma$ are appropriate matrices derived from the dynamics \eqref{eq:sys}, which can be found e.g.\ in \cite{prandini2012randomized}. 
Let also $\HH_x:=\textup{diag}(H_x, ..., H_x)$,  $\HH_u:=\textup{diag}(H_u, ..., H_u)$, $\hh_x = [h_x^\top,...,h_x^\top]^\top, \hh_u = [h_u^\top,...,h_u^\top]^\top$, and define: $\Xcal_N := \{\xx\in\R^{nN}: \HH_x\xx\leq\hh_x\}$, $\Ucal_N := \{\uu\in\R^{mN}: \HH_u\uu\leq\hh_u\}$.

Then, \eqref{sbocp} can be written in function of the resulting probability distribution $\tP$ as
\begin{align}\label{scen_mpc_ini}
     \min_{\uu} \{J(x_t, \tP, \uu): \uu\in \Fcal(x_t, \tP)\},
\end{align}
where
\begin{align}
     J(x_t, \uu_t, \tP) = \sum_{j=1}^{\tM} \tpj \left(\|\xxj_t\|_1 + \|\uu_t\|_1 \right) \label{eq:comp_cost}
\end{align}
is the cost function that we minimize, and
\begin{align}\label{eq:feas}
\begin{split}
    \Fcal(x_t, \tP) := \{\uu_t\in\R^{mN}: & \sum_{j=1}^{\tM} \tpj \mathbf{1}_{\xxj_t\in\Xcal_N}\geq1-\varepsilon, \\
    & \uu_t\in\Ucal_N\}
\end{split}
\end{align}
denotes the set of feasible inputs. Note that the dependence of $J$ and $\Fcal$ on $x_t$ is implicit in $\xxj_t$ according to \eqref{eq:compact_dyn}. We also observe that the way $\tP$ is constructed affects both the cost function \eqref{eq:comp_cost} and the constraint set \eqref{eq:feas} of \eqref{scen_mpc_ini}, which coincide with \eqref{sbocp} for $\tP=\P$.

\section{Scenario reduction}\label{sec:red}

The majority of the reduction algorithms selects a set of representative scenarios in a way that some suitable similarity metric between the initial and the resulting distributions is optimized. This can be, e.g., the Wasserstein distance \cite{romisch2007stability, dupavcova2003scenario}, or a discrepancy distance \cite{henrion2008discrepancy}, between the probability distributions $\P, \tP$. Such reduction algorithms are typically of combinatorial complexity \cite{heitsch2003scenario}; hence, approximate methods to perform a reduction have been proposed in the context of stochastic programming. A popular family of reduction algorithms is based indeed on grouping similar scenarios in a clustering fashion \cite{rujeerapaiboon2022scenario}, i.e., given the initial set of scenarios $\Scal:=\{\eeta^{(1)}, ..., \eeta^{(M)}\}$ and the desired cardinality $\tM$ for the reduced scenario set, a clustering-based reduction algorithm groups similar scenarios in order to minimize an appropriate cost function, and one representative scenario is chosen for each cluster with probability equal to the sum of the probabilities of the scenarios belonging to the same cluster. In the following, we propose a novel problem-dependent clustering-based reduction algorithm that also includes the more standard $k$-means \cite{hartigan1979algorithm} and $k$-medians \cite{whelan2015understanding}. By denoting the reduced scenario set as $\tScal:=\{\teeta^{(1)}, ..., \teeta^{(\tM)}\}$, the proposed clustering-based scenario reduction algorithm minimizes the following objective function:
\begin{align*}
    \Lscr(\tScal) 
    & = \E_{\eeta\sim\P}\left[\min_{\teeta\in\tScal}\|\eeta-\teeta\|_l^l\right]
     = \sum_{h=1}^{M}\ph \min_{\teeta\in\tScal}\|\eetah-\teeta\|_l^l,
\end{align*}
where we consider $l\in\{1, 2\}$.
Let us now define a cluster as the set of indices corresponding to the closest scenarios to a given center $\teetaj$, i.e.
\begin{align*}
    \Ccalj:=\{h\in\{1,...,M\} : \arg\min_{i\in\{1,...,\tM\}}\|\eeta^{(h)}-\teeta^{(i)}\|^l_l = j\}.
\end{align*}
Hence, the objective function $\Lscr$ can be equivalently written as
\begin{align}\label{eq:clus:cost}
    \Lscr(\tScal) = \sum_{j=1}^{\tM}\sum_{h\in\Ccalj}\ph \|\eetah-\teetaj\|_l^l,
\end{align}
and the scenario reduction problem becomes
\begin{align}\label{clus}
    \min_{\tScal} \Lscr(\tScal).
\end{align}

The rationale behind the cost function \eqref{eq:clus:cost} is that we intend to minimize the expected distance between the scenarios belonging to a given cluster $\Ccalj$ and the corresponding center $\teetaj$, $j=1,...,\tM$.
The reason why in \eqref{eq:clus:cost} we explicitly consider the expectation over the distribution $\P$ will be clear in the performance analysis in the next section, where we show that there exists a connection between the loss \eqref{eq:clus:cost} and the impact of the scenario reduction on the system performance.
It is well known that \eqref{clus} is NP-hard \cite{dasgupta2008topics}; hence, greedy algorithms are frequently used to provide suboptimal solutions to \eqref{clus}. Inspired from \cite{dasgupta2008topics}, we propose the following iterative algorithm:
\begin{algorithm}[h!]
\caption{Clustering-based scenario reduction}\label{alg:red}
\begin{algorithmic}
\Require $\Scal, p^{(1)}, ..., p^{(M)}, \tM$;
\State Initialize $\tScal \subseteq \Scal$ arbitrarily;
\While{$\Lscr(\tScal)$ decreases} 
\For {$j\in\{1,...,\tM\}$}
\begin{align*}
&\begin{aligned}
    \quad 1) \  \ \Ccalj\leftarrow\{&h\in\{1,\ldots,M\} : 
\\&\teetaj=\arg\min_{\teeta^{(i)}\in\tScal}\|\eeta^{(h)}-
    \teeta^{(i)}\|^l_l\};
\end{aligned}
\\& \begin{aligned}
    \quad 2) \ \teetaj \leftarrow \arg\min_{\eeta\in\R^{Nn}}\sum_{h\in\Ccalj} \ph \|\eeta^{(h)}-\eeta\|^l_l;
\end{aligned}
\end{align*}
\EndFor
\State Assign $\tScal \leftarrow \{\teeta^{(1)}, ..., \teeta^{(\tM)}\}$;
\EndWhile
\State Assign $\tpj=\sum_{h\in\Ccalj}p^{(h)}, j=1,...,\tM$;
\State \Return $\tScal, \tp^{(1)}, ..., \tp^{(\tM)}$.
\end{algorithmic}
\end{algorithm}

Let us state the following lemma:
\begin{lemma}\label{lem:clus}
    Algorithm \ref{alg:red} converges to a (suboptimal) solution of \eqref{clus} in a finite number of iterations. In particular, if $\ph=\frac{1}{M}, h=1,...,M$, the update for $\teetaj$ in step 3) in Algorithm \ref{alg:red} corresponds to:
    \begin{itemize}
        \item[a)] $\teetaj = \sum_{h\in\Ccalj}\ph\eetah / |\Ccalj|$ if $l=2$ (i.e., $k$-means clustering);
        \item[b)] $\teetaj$ such that  $\teetaj_i$ is the median of the $i$-th entries of each $\eeta^{(h)}, h\in\Ccalj$, $i=1,...,nN$, if $l=1$ (i.e., $k$-medians clustering).
    \end{itemize}
\end{lemma}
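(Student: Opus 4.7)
My approach is to view Algorithm \ref{alg:red} as a two-block alternating minimization scheme on an auxiliary objective, and to read off the two closed-form updates by solving the inner optimization in each block separately.

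First, I would rewrite the clustering loss as a function of both the cluster assignments and the centers by introducing $\Lscrh(\{\Ccalj\}_{j},\{\teetaj\}_{j}) := \sum_{j=1}^{\tM}\sum_{h\in\Ccalj}\ph\|\eetah-\teetaj\|_l^l$, subject to $\{\Ccalj\}$ being a partition of $\{1,\dots,M\}$. Note that for fixed centers, minimizing $\Lscrh$ over partitions yields exactly the nearest-center assignment (step 1), so $\Lscrh(\{\Ccalj\},\{\teetaj\})$ after step 1 equals $\Lscr(\tScal)$. I would then observe that step 1 cannot increase $\Lscrh$ (each point is reassigned to an argmin), and step 2 cannot increase $\Lscrh$ either (each center is replaced by a minimizer of a subproblem containing the current center as a feasible point). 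Hence the sequence of values of $\Lscr$ produced by successive outer loops is non-increasing.

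For finite termination, I would exploit the fact that after step 1 the partition $\{\Ccalj\}$ is completely determined by the current centers (using a fixed tie-breaking rule), and the update in step 2 is a deterministic function of the partition. Consequently the state visited at the top of each outer iteration can be indexed by the partition, and the set of partitions of $\{1,\dots,M\}$ into at most $\tM$ blocks is finite (bounded by $\tM^M$). A monotone decrease together with finitely many possible values forces the loss to stabilize, at which point the while loop exits; the returned configuration is a local minimum of $\Lscrh$ and hence a (possibly suboptimal) feasible solution of \eqref{clus}.

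For the two closed-form updates in step 2, I would solve $\min_{\eeta} \sum_{h\in\Ccalj}\ph\|\eetah-\eeta\|_l^l$ separately for $l=2$ and $l=1$ under the uniform-weight assumption $\ph=1/M$. For $l=2$ the objective is a strongly convex quadratic, so setting its gradient to zero gives the weighted centroid $\sum_{h\in\Ccalj}\ph\eetah / \sum_{h\in\Ccalj}\ph$; with uniform weights the denominator equals $|\Ccalj|/M$, reproducing the $k$-means formula in item (a) of the lemma. For $l=1$ I would use separability of the $1$-norm across coordinates: $\|\eetah-\eeta\|_1=\sum_{i=1}^{nN}|\eetah_i-\eeta_i|$ allows minimizing one scalar problem per coordinate, and the classical fact that the median minimizes the sum of absolute deviations of a finite scalar sample yields item (b). The main subtlety I anticipate is handling ties: non-uniqueness of the median for even cardinality clusters, and equidistant points in the argmin of step 1, both of which I would resolve by fixing a deterministic tie-breaking rule so that the state-to-partition map used in the termination argument remains well defined.
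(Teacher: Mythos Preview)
Your approach is correct and is essentially the same as the paper's: both argue that steps 1) and 2) of Algorithm~\ref{alg:red} are alternating descent steps on the loss \eqref{eq:clus:cost}, so $\Lscr$ is monotonically non-increasing, and then identify the step-2 minimizer with the $k$-means centroid ($l=2$) or the coordinate-wise median ($l=1$) under uniform weights. Your treatment is in fact more complete than the paper's brief sketch --- you make the finite-termination argument explicit via the finiteness of partitions and handle ties, whereas the paper only asserts monotone decrease and defers the centroid/median identifications to a citation.
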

\begin{proof}
The convergence immediately follows by observing that the updates in 1) and 2) lead to a decrease in the cost \eqref{eq:clus:cost}; hence, $\Lscr$ is monotonically decreasing in the iterations of Algorithm \ref{alg:red}. Then, a) and b) follow by observing that with $\ph=\frac{1}{M}$ Algorithm \eqref{alg:red} corresponds to $k$-means (if $l=2$) and $k$-medians (if $l=1$) clustering \cite{dasgupta2008topics}.
\end{proof}
\begin{remark}
    In the case where the probabilities are not all equal, part a) of Lemma \eqref{lem:clus} corresponds to the weighted average of the scenarios belonging to a given cluster $\Ccalj$, where the weights are $\ph, h\in\Ccalj$. Part b) corresponds to the weighted median of the scenarios belonging to a given cluster $\Ccalj$, where the weighted median is computed element-wise. For the definition and the computation of the weighted median refer to \cite{gurwitz1990weighted}.
\end{remark}

\textcolor{black}{Algorithm \ref{alg:red} can be used in an offline phase to derive a set of scenarios with reduced cardinality, which is then employed as description of the uncertainty for the online optimal control problem. However, using a reduced number of scenarios has an impact on the performance of the system, as shown in the next section.}

\section{Enforcing out-of-sample guarantees}\label{sec:oos}
The probability $\tP$ can indeed be constructed from Algorithm \ref{alg:red}, by associating to each scenario $\teetaj$ the probability $\tpj$, $j=1,...,\tM$.
The resulting distribution $\tP$ will then affect the optimal solution of problem \eqref{scen_mpc_ini}, and, moreover, considering a reduced number of scenarios in \eqref{scen_mpc_ini} might compromise the OOS guarantees of the controlled system, considering that it evolves according to the original distribution $\P$.
Hence, in this section, we focus on the effect that a scenario reduction can have on the OOS guarantees of the real system, in terms of actual chance constraint satisfaction and performance.

\subsection{Adaption of the constraint set}
Since the actual system dynamics evolve according to the distribution $\P$, it might happen that the optimal input obtained by solving \eqref{scen_mpc_ini} (i.e., the problem that employs the reduced scenario set), leads to an actual constraint violation larger than the theoretical value $\varepsilon$. Hence, in this section, we propose a sufficient condition to ensure that $\Fcal(x_t, \tP)\subseteq\Fcal(x_t, \P), \ \forall x_t\in\Xcal$. 

\begin{theorem}\label{th:feas}
    Let $\Ecal_t^{(j)}$ be a set such that $\GGamma \left(\eetah_t - \teetaj_t \right)\in\Ecal_t^{(j)}, \forall h\in\Ccalj$, and let us consider the following constraint set, obtained by tightening the state set $\Xcal_N$:
    \begin{align}\label{th:feas:tightenedset}
        \Tilde{\Fcal}(x_t, \tP) := \{\uu\in\R^{mN}: & \sum_{j=1}^{\tM} \tpj \mathbf{1}_{\xxj_t\in\Xcal_N\ominus\Ecal_t^{(j)}}\geq1-\varepsilon, \nonumber \\ & \uu_t\in\Ucal_N\}. 
    \end{align}
    Then it holds that $\Tilde{\Fcal}(x_t, \tP)\subseteq\Fcal(x_t, \P), \ \forall x_t\in\Xcal$.
\end{theorem}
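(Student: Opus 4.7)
The plan is to show the inclusion pointwise: take an arbitrary $\uu\in\Tilde{\Fcal}(x_t,\tP)$ and verify that it satisfies the two defining conditions of $\Fcal(x_t,\P)$. The input constraint $\uu\in\Ucal_N$ is identical in both sets and requires nothing. The whole argument therefore reduces to showing that the chance constraint tightened over the reduced set implies the chance constraint against the original distribution.

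The structural observation driving the proof is that, for the same input $\uu$, the compact dynamics \eqref{eq:compact_dyn} give, for each $h\in\Ccalj$,
\begin{equation*}
    \xx^{(h)}_t \;=\; \FF x_t + \GG\uu + \GGamma\eeta^{(h)}_t \;=\; \tilde{\xx}^{(j)}_t + \GGamma\bigl(\eeta^{(h)}_t-\tilde{\eeta}^{(j)}_t\bigr),
\end{equation*}
i.e.\ the true trajectory associated with scenario $h$ in cluster $j$ differs from the reduced trajectory $\tilde{\xx}^{(j)}_t$ by exactly the disturbance $\GGamma(\eeta^{(h)}_t-\tilde{\eeta}^{(j)}_t)$, which by hypothesis lies in $\Ecal_t^{(j)}$. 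From the definition of the Minkowski difference this yields the pointwise implication: if $\tilde{\xx}^{(j)}_t\in\Xcal_N\ominus\Ecal_t^{(j)}$ then $\xx^{(h)}_t\in\Xcal_N$ for every $h\in\Ccalj$. In terms of indicators, this gives $\mathbf{1}_{\tilde{\xx}^{(j)}_t\in\Xcal_N\ominus\Ecal_t^{(j)}}\leq \mathbf{1}_{\xx^{(h)}_t\in\Xcal_N}$ for all $h\in\Ccalj$.

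Then I would chain the inequalities using the partition $\{1,\ldots,M\}=\bigsqcup_{j=1}^{\tM}\Ccalj$ produced by Algorithm~\ref{alg:red} and the aggregation $\tpj=\sum_{h\in\Ccalj}p^{(h)}$:
\begin{align*}
    \sum_{h=1}^{M} p^{(h)}\mathbf{1}_{\xx^{(h)}_t\in\Xcal_N}
    &= \sum_{j=1}^{\tM}\sum_{h\in\Ccalj} p^{(h)}\mathbf{1}_{\xx^{(h)}_t\in\Xcal_N} \\
    &\geq \sum_{j=1}^{\tM}\sum_{h\in\Ccalj} p^{(h)}\mathbf{1}_{\tilde{\xx}^{(j)}_t\in\Xcal_N\ominus\Ecal_t^{(j)}} \\
    &= \sum_{j=1}^{\tM}\tpj\,\mathbf{1}_{\tilde{\xx}^{(j)}_t\in\Xcal_N\ominus\Ecal_t^{(j)}}
    \;\geq\; 1-\varepsilon,
\end{align*}
where the last inequality comes from the assumption $\uu\in\Tilde{\Fcal}(x_t,\tP)$. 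This is exactly the chance constraint defining $\Fcal(x_t,\P)$, so $\uu\in\Fcal(x_t,\P)$, completing the argument.

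There is no real obstacle here: the proof is essentially bookkeeping around the Minkowski-difference property $y\in X\ominus E,\ e\in E\ \Rightarrow\ y+e\in X$ and the fact that clusters partition the original scenario index set. The only point that deserves a careful sentence is the indicator inequality: because $\Ecal_t^{(j)}$ is required to cover \emph{every} difference $\GGamma(\eeta^{(h)}_t-\tilde{\eeta}^{(j)}_t)$ for $h\in\Ccalj$, a single event on the reduced side implies simultaneous feasibility of all $|\Ccalj|$ original-side trajectories, which is what makes the sum-by-clusters step tight.
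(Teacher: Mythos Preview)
Your proposal is correct and follows essentially the same approach as the paper: both derive the trajectory identity $\xx^{(h)}_t=\tilde{\xx}^{(j)}_t+\GGamma(\eeta^{(h)}_t-\tilde{\eeta}^{(j)}_t)$, invoke the Minkowski-difference property to obtain the indicator inequality, and then aggregate over the cluster partition using $\tpj=\sum_{h\in\Ccalj}p^{(h)}$. Your write-up is slightly cleaner in that you make the partition step explicit, but the argument is the same.
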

\begin{proof}
Let $\teetaj, j\in\{1,...,\tM\},$ be one of the scenarios belonging to the reduced set, and let $\eeta^{(h)}$ be a scenario belonging to the same cluster, i.e.,  $h\in\Ccalj$. Let $\txxj_t$ be the scenario trajectory corresponding to scenario $\teetaj$, and $\xx^{(h)}_t$ the scenario trajectory corresponding to scenario $\eeta^{(h)}$. We now show that if $\txxj_t \in \Xcal_N\ominus\Ecal_t^{(j)}$ then $\xx^{(h)}_t \in \Xcal_N$. Indeed, for any input $\uu\in\Tilde{\Fcal}(x_t, \tP)$, we have:
\begin{align}\label{proof:belong}
\begin{split}
    \xx^{(h)}_t
    & = \FF x_t + \GG\uu_t + \GGamma \eeta^{(h)}_t 
    \\ & = \FF x_t + \GG\uu_t + \GGamma \teetaj_t + \GGamma \left(\eeta^{(h)}_t - \teetaj_t \right)    
    \\ & \in \Xcal_N\ominus\Ecal_t^{(j)}\oplus\Ecal_t^{(j)} 
    \\ & \in \Xcal_N ,
\end{split}
\end{align}
where we exploit the known property: $A\ominus B\oplus B \subseteq A$. 
Let us now prove that the relations in \eqref{proof:belong} are  satisfied with the required probability $1-\varepsilon$. The fact that $\xx^{(j)}_t \in \Xcal_N\ominus\Ecal_t^{(j)}$ implies $\xx^{(h)}_t \in \Xcal_N$, translates in $\mathbf{1}_{\xx_t^{(j)}\in(\Xcal_N\ominus\Ecal^{(j)}_t)} = 1 \Rightarrow \mathbf{1}_{\xx^{(h)}_t\in\Xcal}=1, \forall h\in\Ccalj$, or equivalently: $\mathbf{1}_{\xx^{(h)}_t\in\Xcal} \geq \mathbf{1}_{\xx_t^{(j)}\in(\Xcal_N\ominus\Ecal^{(j)}_t)}, \forall h\in\Ccalj$.  The state constraint for the scenario trajectory $\xx^{(j)}_t$ is satisfied with probability $\tpj = \sum_{h\in\Ccalj} p^{(h)}$; hence, if $\uu\in\Tilde{\Fcal}(x_t, \tP)$, it must hold that
\begin{align*}
    1-\varepsilon 
    & \leq \sum_{j=1}^{\tM} \tpj \mathbf{1}_{\xx_t^{(j)}\in(\Xcal\ominus\Ecal^{(j)}_t)} \\
    & \leq \sum_{j=1}^{\tM} \sum_{h\in\Ccalj} \ph \mathbf{1}_{\xx_t^{(h)}\in\Xcal} \\
    & \leq \sum_{j=1}^{M} \pj \mathbf{1}_{\xx_t^{(j)}\in\Xcal} \ ,
\end{align*}
and this proves that $\uu$ is feasible for the original set of scenarios, i.e.: $\uu\in\Fcal(x_t, \P)$.
\end{proof}

The meaning of Theorem \ref{th:feas} is that the optimal input sequence obtained by solving the scenario-based optimal control problem \eqref{scen_mpc_ini} with the tightened constraint set described
by \eqref{th:feas:tightenedset} is indeed feasible in terms of chance constraint satisfaction for the actual system, which evolves according to $\P$.

\subsection{Adaption of the cost function}
Even if Theorem \ref{th:feas} ensures $\Tilde{\Fcal}(x_t, \tP)\subseteq\Fcal(x_t, \P)$, it might still happen that
the solution of \eqref{scen_mpc_ini} leads to an over-optimistic expected performance of the controlled system, due to the usage of a limited number of scenarios in the cost function. Hence, by defining \begin{align}
    J^\star(x_t, \P) &:= \min_{\uu} \{J(x_t, \P, \uu): \uu\in \Fcal(x_t, \P)\}, \label{eq:optimalcost_param}    
    \\ \Tilde{J}^\star(x_t, \tP) &:= \min_{\uu} \{J(x_t, \tP, \uu): \uu\in \Tilde{\Fcal}(x_t, \tP)\}, \label{eq:optimalcost_param_red}
\end{align}
we now aim at finding a correction term $c(x_t, \uu, \P, \tP)\geq0$ such that
\begin{align*}
    \Tilde{J}^\star(x_t, \tP) + c(x_t, \uu, \P, \tP) \geq J^\star(x_t, \P), \ \forall x_t\in\Xcal.
\end{align*}

\begin{theorem}\label{th:cost}
    Let $\bc_t\in\R_{>0}$ be a constant such that
    \begin{align*}
        \bc_t \geq \sum_{j=1}^{\tM} \sum_{h\in\Ccalj} \ph \|\GGamma(\eeta_t^{(h)}-\Tilde{\eeta}_t^{(j)})\|_1 .
    \end{align*}
    Then it holds that
    \begin{align}\label{eq:th:claim}
        \Tilde{J}^\star(x_t, \tP) + \bc_t \geq J^\star(x_t, \P), \ \forall x_t\in\Xcal.
    \end{align}
\end{theorem}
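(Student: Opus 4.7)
The plan is to chain two inequalities. First, by Theorem~\ref{th:feas}, any minimizer $\tuu^\star$ of the tightened problem \eqref{eq:optimalcost_param_red} satisfies $\tuu^\star \in \Tilde{\Fcal}(x_t,\tP) \subseteq \Fcal(x_t,\P)$, so it is admissible for the original problem \eqref{eq:optimalcost_param}, which gives $J^\star(x_t,\P) \le J(x_t,\tuu^\star,\P)$. Second, I would establish the pointwise bound $J(x_t,\uu,\P) \le J(x_t,\uu,\tP) + \bc_t$ for every $\uu$; evaluating this at $\uu=\tuu^\star$ and chaining with the feasibility inequality yields \eqref{eq:th:claim} because $J(x_t,\tuu^\star,\tP) = \Tilde{J}^\star(x_t,\tP)$.

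For the pointwise bound, I would first observe that the input-penalty part of the cost cancels exactly between the two distributions: since the clusters $\Ccal^{(1)},\dots,\Ccal^{(\tM)}$ partition $\{1,\dots,M\}$ and $\tpj=\sum_{h\in\Ccalj}p^{(h)}$, we have $\sum_{j=1}^{\tM}\tpj\|\uu_t\|_1 = \sum_{h=1}^{M}p^{(h)}\|\uu_t\|_1$. Hence it suffices to compare the state-norm contributions $\sum_{h=1}^{M}p^{(h)}\|\xxh_t\|_1$ and $\sum_{j=1}^{\tM}\tpj\|\txxj_t\|_1$.

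For that, I would regroup the sum over $h$ as a double sum over clusters, write $\xxh_t = \txxj_t + \GGamma(\eetah_t-\teetaj_t)$ for each $h\in\Ccalj$ (which follows from \eqref{eq:compact_dyn} since $\FF x_t + \GG\uu_t$ is common), and apply the triangle inequality of the $1$-norm to obtain $\|\xxh_t\|_1 \le \|\txxj_t\|_1 + \|\GGamma(\eetah_t-\teetaj_t)\|_1$. Summing over $h\in\Ccalj$ with weights $p^{(h)}$ and again over $j$, the first term collapses to $\sum_{j=1}^{\tM}\tpj\|\txxj_t\|_1$, while the second term is bounded above by $\bc_t$ by definition. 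Combining yields $J(x_t,\uu,\P) \le J(x_t,\uu,\tP)+\bc_t$ as required.

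The main obstacle is mostly conceptual rather than technical: the argument depends crucially on the separable, positively-homogeneous structure of the $1$-norm cost, which allows the triangle-inequality decomposition to propagate cleanly through the expectation and motivates the choice of $\|\cdot\|_1$ already highlighted in the earlier remark. Apart from being careful to invoke Theorem~\ref{th:feas} at the right step to transfer feasibility from $\Tilde{\Fcal}(x_t,\tP)$ to $\Fcal(x_t,\P)$ before comparing costs, the rest is a short, direct computation.
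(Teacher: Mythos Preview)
Your proposal is correct and follows essentially the same approach as the paper's proof: invoke Theorem~\ref{th:feas} to transfer feasibility of $\tuu^\star$ to $\Fcal(x_t,\P)$, then bound the difference in costs by regrouping over clusters, cancelling the input term via $\tpj=\sum_{h\in\Ccalj}p^{(h)}$, using \eqref{eq:compact_dyn} to write $\xxh_t-\txxj_t=\GGamma(\eetah_t-\teetaj_t)$, and applying the triangle inequality for $\|\cdot\|_1$. The only cosmetic difference is that the paper first passes to $|J(x_t,\tuus_t,\P)-J(x_t,\tuus_t,\tP)|$ before bounding, whereas you establish the one-sided pointwise inequality $J(x_t,\uu,\P)\le J(x_t,\uu,\tP)+\bc_t$ directly; your route is marginally more streamlined but otherwise identical in content.
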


\begin{proof}
    Let $\uus_t$ and $\tuus_t$ be the minimizers associated, respectively, to \eqref{eq:optimalcost_param} and \eqref{eq:optimalcost_param_red}. Then it follows that
    \begin{align}
        J^\star(x_t, \P) - \Tilde{J}^\star(x_t, \tP) & = J(x_t, \uus_t, \P) - J(x_t, \tuus_t, \tP) 
        \\ & \leq J(x_t, \tuus_t, \P) - J(x_t, \tuus_t, \tP) \label{eq:th1:subbound}
        \\ & \leq | J(x_t, \tuus_t, \P) - J(x_t, \tuus_t, \tP)|,
    \end{align}
    where in \eqref{eq:th1:subbound} we exploit that the input $\tuus_t$ is feasible for problem \eqref{sbocp} thanks to Theorem \ref{th:feas}. Then, exploiting the definition of the cost function \eqref{eq:comp_cost}, we have
    \begin{align}
        & | J(x_t, \tuus_t, \P) - J(x_t, \tuus_t, \tP)| 
        \\ & = \left|\sum_{j=1}^{M} \pj \|\xxj_t\|_1  - \sum_{j=1}^{\tM} \tpj \|\txxj_t\|_1 \right| \label{proof:cost1}
        \\& = \left|\sum_{j=1}^{\tM} \sum_{h\in\Ccalj} \ph \left( \|\xxh_t\|_1 - \|\txxj_t\|_1  \right)\right|
        \\& \leq \sum_{j=1}^{\tM} \sum_{h\in\Ccalj} \ph \|\xxh_t- \txxj_t\|_1 \label{proof:cost2}
        \\& \leq \sum_{j=1}^{\tM} \sum_{h\in\Ccalj} \ph \left\|\GGamma\left(\eeta^{(h)}_t- \Tilde{\eeta}^{(j)}_t\right)\right\|_1 , \label{proof:cost3}
    \end{align}
    where in \eqref{proof:cost1} we leverage that the applied input is the same, in \eqref{proof:cost2} we use the triangular inequality, and in \eqref{proof:cost3} we use the dynamics \eqref{eq:compact_dyn}.
    Then \eqref{eq:th:claim} follows by selecting a constant $\bc_t$ greater or equal than the quantity in \eqref{proof:cost3}.
\end{proof}

Notice in particular that the quantity $\bc_t$ does not depend on the current state $x_t$ of the system.
\begin{remark}
    From the above proof, it follows that
    \begin{align}
        J^\star(x_t, \P) &- \Tilde{J}^\star(x_t, \tP) \label{eq:perfbound}
        \\& \leq \|\GGamma\|_1\sum_{j=1}^{\tM} \sum_{h\in\Ccalj} \ph \left\|\left(\eeta^{(h)}_t- \Tilde{\eeta}^{(j)}_t\right)\right\|_1
        \\& \leq \|\GGamma\|_1\Lscr(\tScal).
    \end{align}
    Hence, the choice of the objective function in the reduction problem \eqref{clus} relates to the distance between the optimal values  \eqref{eq:perfbound} of the optimal control problem \eqref{scen_mpc_ini}. In other words, the clustering algorithm in \eqref{clus} has a problem-dependent nature, i.e., by minimizing the cost function \eqref{eq:clus:cost} we also reduce the difference in \eqref{eq:perfbound}.
\end{remark}

By putting Theorems \ref{th:feas} and \ref{th:cost} together, the following optimization problem:
\begin{align}\label{scen_final}
    \min_{\uu} \{J(x_t, \tP, \uu) + \bc_t: \uu\in \Tilde{\Fcal}(x_t, \tP)\}
\end{align}
over-approximates the original one \eqref{sbocp}. The main advantage in solving \eqref{scen_final} is that a reduced scenario set is used, which provides a consistent computational advantage. Nevertheless, the effect of the discarded scenarios is implicitly described in \eqref{scen_final} by the constraint tightening and by the over-approximation of the cost function.
Thanks to the constraint tightening, any input that is feasible for \eqref{scen_final} is also feasible for the original problem \eqref{sbocp}, and the optimal cost of \eqref{scen_final} provides an upper bound for the cost of \eqref{sbocp}. \textcolor{black}{For the constraint tightening and the over-approximation of the cost, notice that the sets $\Ecal^{(j)}_t, j=1,...,\tM,$ as well as the constant $\bc_t$, can easily be computed from the knowledge of the matrix $\GGamma$, and by measuring the distance between each scenario and the representative scenario of the same cluster to which the given scenario belongs.}

\section{Numerical example} \label{sec:ex}
Let us consider a discrete time dynamical system \eqref{eq:sys} with
\begin{align}\label{eq:example:dyn}
    A=\begin{bmatrix} 1 & 1 \\ 0 & 0.5 \end{bmatrix}, \quad B=\begin{bmatrix} 0 \\ 1 \end{bmatrix}
\end{align}
and constraints: $\Pr(x_{t+k}\geq-1, k=1,...,N-1)\geq 0.2$, $\|u_{t+k}\|_{\infty}\leq 2$. We consider a prediction horizon of $N=10$, and an initial scenario set of cardinality $M=200$. For simplicity, we assume that all the scenarios are equally likely, i.e.: $\pj=\frac{1}{M}, j=1,...,M$. Hence, Lemma \ref{lem:clus} suggests that the $k$-medians algorithm is particularly suited, since the chosen norm is the 1-norm. In the following, we provide a comparison between the solution of \eqref{scen_mpc_ini} (i.e., the problem solved considering the reduced scenario set, denoted by P1) and \eqref{scen_final} (i.e., the problem that, in addition, considers the sufficient conditions for OOS guarantees, denoted by P2) with \eqref{sbocp} (i.e., the original problem that considers the original set with $M=200$ scenarios, denoted by EXACT). For solving P1 and P2, we run both $k$-medians (denoted by kMED) and $k$-means (denoted by kMNS) with $\tM\in\{5, 25, 50, 75, 100, 125, 150, 175\}$ scenarios. We cast the resulting optimal control problem as an MILP, by associating a binary variable to each scenario, and by reformulating the chance constraint \eqref{sbocp:sc} accordingly \cite{pagnoncelli2009sample}.
For all the simulations, we use Gurobi 10.0.2 as a solver on a CPU Intel i7-1185G7 @ 3.00GHz.

In Figure \ref{fig:performance}, we compare the OOS performance of the system for both P1 and P2.
In contrast to P1, it is possible to notice that P2 offers OOS guarantees for the real system. In particular, thanks to the constraint tightening, Figure \ref{fig:performance} shows that the actual constraint violation is always lower than the theoretical value $\varepsilon$. In addition, in Figure \ref{fig:performance} we note that the optimal cost of P2 provides a more reliable estimate of the cost associated to the initial set of scenarios. In synthesis, P2 provides OOS guarantees thanks to the tightening and the over-approximation of the cost proposed in Section \ref{sec:oos}, at the cost of a mild conservatism that is gradually reduced by increasing $\tM$. We observe that both P1 and P2 converge to the same value of actual constraint violation and optimal cost as the number of scenarios is increased. We also note that $k$-medians and $k$-means lead to comparable results; however, $k$-medians, which employs the 1-norm in the cost function, slightly outperforms $k$-means. This is expected, since the norm chosen in the cost of the optimal control problem is indeed the 1-norm.
Finally, in the last plot of Figure \ref{fig:performance} we observe that the solver time required to solve P1 and P2 grows exponentially with the number of scenarios due to the mixed-integer reformulation of the chance constraint \eqref{sbocp:sc}. The solver time is comparable for P1 and P2, and P2 offers OOS guarantees even with a small number of scenarios and at a low computational cost with respect to P1.

\begin{figure}
    \hspace{-0.2cm}
    \includegraphics{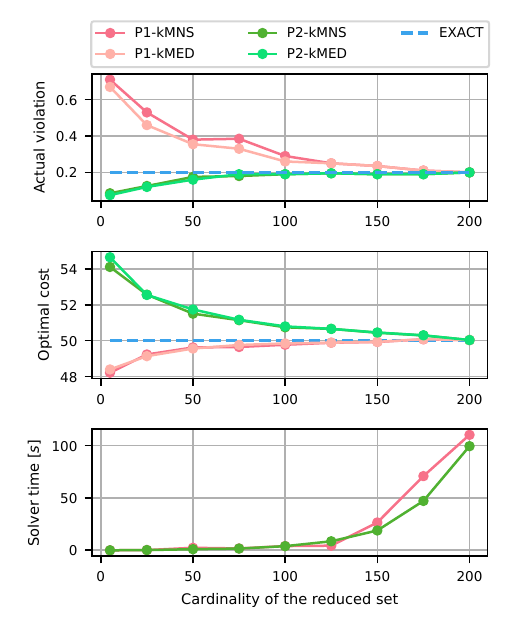}
    \caption{Performance analysis for the real system. In particular, we notice that P2 over-approximates the original problem \eqref{sbocp} both in terms of constraint satisfaction and performance, and the associated computational cost is comparable to P1.}
    \label{fig:performance}
\end{figure}

\section{CONCLUSIONS}
In this paper, we have shown that scenario reduction algorithms lead to a significant computational advantage in stochastic optimal control problems, while, however, compromising the OOS performance of the system. To tackle this issue we have proposed a scenario-reduction algorithm tailored for optimal control problems, and we have proposed sufficient conditions to implicitly keep into account the discarded scenarios in the optimal control problem at a low computational cost. The numerical example demonstrates that the proposed method leads to a consistent computational advantage, since few scenarios can be used without compromising the OOS guarantees, at the cost of a moderate conservatism.

Ongoing and future work consists in extending these results to nonlinear systems and more general sources of uncertainty, as well as providing guarantees in a closed-loop setting. Another future step is to use scenarios to approximate continuous distributions, and to provide OOS guarantees with a certain confidence level.

% \addtolength{\textheight}{-12cm}   % This command serves to balance the column lengths
                                  % on the last page of the document manually. It shortens
                                  % the textheight of the last page by a suitable amount.
                                  % This command does not take effect until the next page
                                  % so it should come on the page before the last. Make
                                  % sure that you do not shorten the textheight too much.

%%%%%%%%%%%%%%%%%%%%%%%%%%%%%%%%%%%%%%%%%%%%%%%%%%%%%%%%%%%%%%%%%%%%%%%%%%%%%%%%

%%%%%%%%%%%%%%%%%%%%%%%%%%%%%%%%%%%%%%%%%%%%%%%%%%%%%%%%%%%%%%%%%%%%%%%%%%%%%%%%

%%%%%%%%%%%%%%%%%%%%%%%%%%%%%%%%%%%%%%%%%%%%%%%%%%%%%%%%%%%%%%%%%%%%%%%%%%%%%%%%

\section*{ACKNOWLEDGMENT}
We wish to thank Matin Jafarian for the fruitful discussions and careful reading.

\bibliography{IEEEabrv,ecc}

\end{document}